\documentclass[english,twocolumn]{paper}
\pdfoutput 1
\usepackage{times}
\usepackage{color}
\usepackage{mathptmx}
\usepackage{graphicx}
\graphicspath{{.}}
\usepackage{amsmath}
\usepackage{amsthm}
\usepackage{amssymb}
\usepackage[nothing]{algorithm}
\usepackage[noend]{algorithmic}
\usepackage{float}

\DeclareMathOperator*{\argmax}{argmax}

\newtheorem{theorem}{Theorem}

\newtheorem{lemma}{Lemma}

\newtheorem{corollary}{Corollary}

\usepackage{float}
\usepackage{hyperref}

\makeatletter
\def\blfootnote{\xdef\@thefnmark{}\@footnotetext}
\makeatother

\begin{document}

\title{Interdiction of a Markovian Evader}
\author{Alexander Gutfraind$^{1}$, Aric A. Hagberg$^{1}$,\\
David Izraelevitz$^{2}$, and Feng Pan$^{2}$}

\maketitle

\begin{abstract}
Shortest path network interdiction is a combinatorial optimization problem on an
activity network arising in a number of important security-related
applications.  It is classically formulated as a bilevel maximin
problem representing an ``interdictor'' and an ``evader''.  The evader
tries to move from a source node to the target node along a path of the least cost
while the interdictor attempts to frustrate this motion
by cutting edges or nodes.  The interdiction objective is to find the
optimal set of edges to cut given that there is a finite interdiction
budget and the interdictor must move first.  We reformulate the
interdiction problem for stochastic evaders by introducing a model in
which the evader follows a Markovian random walk guided by the
least-cost path to the target. This model can represent incomplete
knowledge about the evader, and the resulting model is a nonlinear $0-1$ optimization problem. We then introduce an optimization 
heuristic based on betweenness centrality that can rapidly find 
high-quality interdiction solutions by providing a global view of the network.
keyword: Network Interdiction; Stochastic Optimization; Guided Random Walk; Betweenness Centrality; LA-UR-08-06551
\end{abstract}

\section{Introduction}
\setcounter{footnote}{0} \blfootnote{$^{1}$ Theoretical Division, Los Alamos National Laboratory, Los Alamos, New Mexico USA 87545, \href{mailto:agutfraind.research@gmail.com}{agutfraind.research@gmail.com}, \href{mailto:hagberg@lanl.gov}{hagberg@lanl.gov}~~  $^{2}$ Risk Analysis and Decision Support Systems, Los Alamos National Laboratory, Los Alamos, New Mexico USA 87545, \href{mailto:izraelevitz@lanl.gov}{izraelevitz@lanl.gov}, \href{mailto:fpan@lanl.gov}{fpan@lanl.gov}.}%
Mathematical modeling of network interdiction originated
in the study of military supply chains and interdiction of transportation
networks~\cite{ghare-1971-optimal,mcmasters-1970-optimal}. The problem 
is currently studied in different classes of networks and in a variety of contexts, 
and finds applications in 
countering of nuclear proliferation programs~\cite{Morton-2007-models}, 
control of infectious diseases~\cite{Pourbohloul05}, 
and disruption of terrorist networks~\cite{Memon06}. 
The underlying networks may represent transportation networks, as well as social or activity networks. 
Recent interest in the problem has been in part due to the
threat of smuggling of nuclear materials and devices~\cite{pan-2003-models}.
Interdiction corresponds to the installation of special radiation-sensitive detectors across transportation links.

The problem is often posed in terms of two agents called ``interdictor'' and ``evader'' where
the evader attempts to minimize some objective function in the network, \emph{e.g.} 
distance, cost, or risk when traveling from network location $s$
to location $t$, while the interdictor attempts to limit success
by removing network nodes or edges. The interdictor has
limited resources and can thus only remove a finite set of nodes or
edges.  In the simplest formulation, 
the interdictor seeks to identify
a set of edges (or nodes) on the network whose removal maximizes the
cost of the least-cost path from a source to a destination node, while
the evader seeks to find and traverse the best unimpeded path. 
This interdiction problem is known as the ``most vital edges'' 
(or ``most vital nodes'') problem~\cite{corley-1982-most} 
and it has been shown to be NP-hard~\cite{bar-noy-1995-complexity} 
and NP-hard to approximate to better than a factor of $2$~\cite{boros06-inapproximability}. 
Methods for solving network interdiction problems have included exact algorithms
for solving integer programs, such as branch-and-bound, as well as
decomposition methods to rebuild the network by iteratively adding
relevant paths to reduce the size of both the underlying network and the number of
binary decision variables. 
A more recent approach, based on structure-dependent cutting planes,
exploits the relationship between the ordered set of evasion paths
and binary interdiction variables~\cite{pan-2007-minmax}.

A common assumption in many studies is that there is perfect
knowledge of hard-to-compute network parameters, such as the cost
to the evader to traverse an edge in terms of resource consumption
or probability of detection. However, it is clear that the evader,
and, to a lesser extent, the interdictor, have unreliable and incomplete 
information about the network. 
These uncertainties place the interdiction problem within stochastic
optimization, where one seeks to find those edges that are vital
\emph{on average}.  
Indeed, under uncertainty the evader must be described in
probabilistic terms.  By constructing such probabilistic evader models
one can expect to develop more robust interdiction solutions. 
The problem of stochastic interdiction has been the focus of a number of recent
studies~\cite{Morton-2007-models,atkinson07,Bayrak08,Janjarassuk08,reich08,Gut-09,Dim-09-MDP}.

Failure to account for evader uncertainty can lead to suboptimal decisions, namely, solutions that do not maximize (and even decrease) the evader's
expected cost to reach the target.  
Consider for instance the network in Fig.~\ref{fig:bad-classical}.
There are four paths from the source to the target: one each through nodes $1,2,3$ and the
one direct path $(0,5)$ with costs $9, 8, 8$ and $8.01$, respectively. 
If only one edge can be removed, the solution in the least-path-cost 
formulation is to remove edge $(4,5)$ which
increases the path cost from $8.0$ to $8.01$.  
However if the evader is unable to determine which path has the 
least cost and takes any path with equal (or nearly equal) probability, 
then this solution is not optimal.
Interdiction at $(4,5)$ actually 
\emph{decreases} the expected cost from $\approx 8.25$ to $8.01$,
because it removes the costly path through node $1$.  The
optimal choice is interdiction of any one of the edges $(0,2)$, $(2,4)$,
$(0,3)$, or $(3,4)$, which increases the expected cost
from $\approx 8.25$ to $\approx 8.33$.

\begin{figure}[htbp]
\begin{center}
\includegraphics[width=1.0\columnwidth]{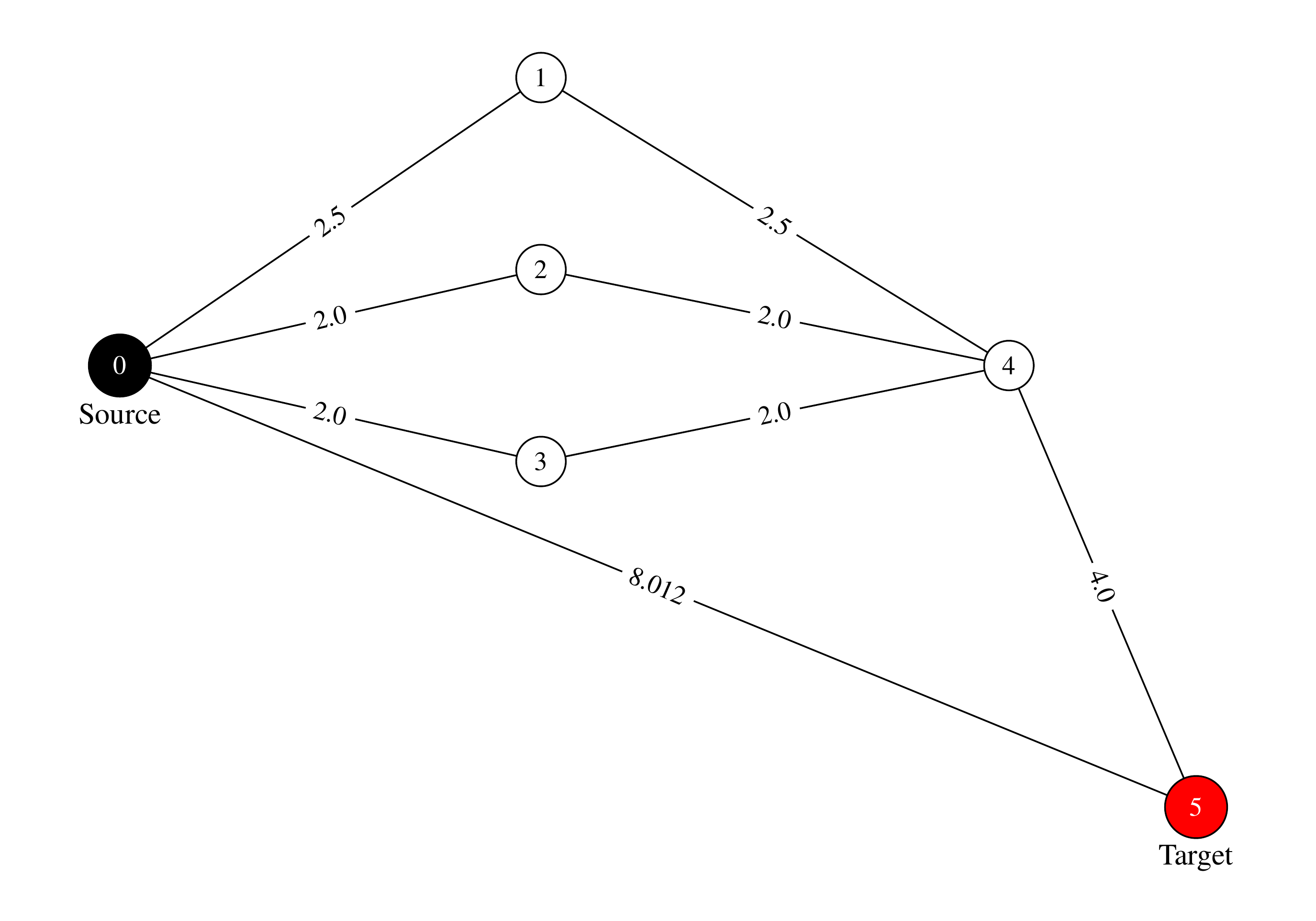}
\end{center}
\caption{Example network where the shortest path interdiction
 formulation produces a suboptimal solution when interdicting a single edge.  
 Interdicting that edge $(4,5)$ decreases the expected path cost.
 Interdicting any one of $(0,2)$, $(2,4)$,$(0,3)$, or $(3,4)$
 increases the expected path cost.
}
\label{fig:bad-classical}
\end{figure}

In this paper we describe a Markovian network interdiction framework which can
capture a wide range of network evader behavior
(Sec.~\ref{sec:general-Model}). 
We then demonstrate the general framework with a simple model based on
evader decision-making mechanisms (Sec.~\ref{sec:Evader-Decision-Model}).
Finally we develop efficient heuristic algorithms for the interdiction
problem based on the structure of the graph 
and then present performance results comparing various heuristic methods
(Sec.~\ref{sec:Solving-the-Markovian}).

\section{The interdiction model}
\label{sec:general-Model}

Our interdiction formulation is a stochastic generalization of the 
max-min shortest path interdiction problem (termed the ``least-cost path'' interdiction problem, to be exact)~\cite{ghare-1971-optimal,mcmasters-1970-optimal, isr-02}.
In the least-cost path formulation an evader attempts to traverse a network
on a path from an origin $s$ to a destination $t$.  
Let $p$ be some path between $s$ and $t$ in a 
graph $G(N,A)$ with the set of nodes $N$ and the set of
weighted edges $A$.  Let $c(p)$ be the path cost computed by 
summing the cost $C_{ij}$ over the edges $(i,j)$ of $p$, and 
 any self-looped edge has zero cost, $C_{ii} = 0$ . 
The edge costs are assumed to be given in the problem
and may depend on direction (in the case that $G(N,A)$ is a directed graph).
Here ``edge cost'' is used interchangeably with ``edge weight''.

The network interdiction strategy is represented by an interdiction set $\mathcal{R}$ 
which is a subset of the edge set $A$ of $b$ (budget).  
The decision variable $r_{ij}$ is set to $1$ if edge $(i,j)\in\mathcal{R}$, i.e. $(i,j)$ is interdicted, and $r_{ij}=0$ otherwise.  
Interdiction increases the cost of traversing $(i,j)$ by a constant $D_{ij}\geq0$.
When the value of $D_{ij}$ is very large all
paths avoid the interdicted edge $(i,j)$ (assuming that there is an
alternative path) which effectively removes the edge $(i,j)$ from the
graph.  One may write $C'_{ij} = C_{ij} + r_{ij} D_{ij}$ but it is more convenient
to use $C_{ij}$ at all times to denote cost that includes possible interdiction.
This makes the matrix ${\bf C}$ a function of $r$.

In the shortest path model, the evader only travels on paths of lowest cost, 
and is fully aware of increases in edge costs caused by interdiction decisions.
This gives the optimization problem
\begin{equation}
\max_{r\in\mathcal{R}}\min_{p\in PT}c(p)\label{eq:max-min-sp}\,,
\end{equation}
where $c(p)$ is implicitly a function of $r$, and $PT$ is the set of paths from $s$ to $t$.
The above formulation is for interdiction of edges but of course,
a similar problem could be considered for node interdiction
(by introducing for all $i\in N$ node costs $D_i$ and decision variables on nodes $r_i$.)

A stochastic version of the interdiction problem can be constructed
by supposing that an evader may take any path from $s$ to $t$, 
according to some probability distribution, 
rather than always choosing a least-cost path. 
Randomness in the evader path decision is due to the lack of knowledge of how the evader travels through the network.  It is fundamentally caused by his uncertainty about interdiction decisions $r$ or
network costs, mistaken cost computations, or possibly even by intent to
increase unpredictability. 
Suppose the evader selects path $p$ with probability $P(p)$. His expected cost of traveling from $s$ to $t$ is then
\begin{equation}
E[c]=\sum_{p\in PT}P(p)c(p)\,.
\label{eq:expected-cost}
\end{equation}
The interdiction problem becomes
\begin{equation}
\max_{r\in\mathcal{R}}\sum_{p\in PT}P(p|r)c_{r}(p)\,,
\label{eq:max-E-cost}
\end{equation}
where $P(p|r)$ is now the probability of traversing a path given
the interdiction set $r$.
The conditional probability $P(p|r)$ implicitly contains the evader's strategy.
The shortest-path optimization problem (\ref{eq:max-min-sp}) is clearly
just a special instance of (\ref{eq:max-E-cost}) when the expectation is
conditioned on traversal of only least-cost paths.

To compute the expected cost $E[c]$, we rewrite it in terms of the edge costs and the number of visits to each edge.
If $F_{ij}$ is the expected number of visits of edge $(i,j)$ by an evader, then
\begin{lemma}
\begin{equation}
E[c] =\sum_{p\in PT}P(p)c(p)= \sum_{(i,j)\in A}{C_{ij}F_{ij}}.
\label{eq:expected_cost_gen}
\end{equation}
\end{lemma}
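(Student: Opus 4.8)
The first equality in (\ref{eq:expected_cost_gen}) is merely the definition (\ref{eq:expected-cost}) of the expected cost, so the real content lies in the second equality. The plan is to expand each path cost as a weighted sum over edges and then swap the order of the two summations so that the sum over edges moves to the outside.

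First I would introduce, for each path $p\in PT$, the traversal count $n_{ij}(p)$ defined as the number of times $p$ uses the edge $(i,j)$. Since the cost of a path is accumulated edge by edge, this gives $c(p)=\sum_{(i,j)\in A} C_{ij}\,n_{ij}(p)$, where edges not used by $p$ simply contribute nothing. Substituting this decomposition into $\sum_{p\in PT} P(p)\,c(p)$ and interchanging the two summations produces $\sum_{(i,j)\in A} C_{ij} \sum_{p\in PT} P(p)\,n_{ij}(p)$. The inner sum is precisely the probability-weighted average number of times edge $(i,j)$ is used, i.e. the expected number of visits $F_{ij}$, and recognizing this closes the argument.

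The step I expect to require the most care is justifying the interchange of summations, because the Markovian walk can in principle generate walks of unbounded length, so both $PT$ and the traversal counts lead to countably infinite sums. The saving grace is nonnegativity: every factor $C_{ij}$, $P(p)$, and $n_{ij}(p)$ is nonnegative (recall $C_{ij}\geq 0$ with $C_{ii}=0$), so Tonelli's theorem for the counting measure licenses the reordering with no integrability hypothesis, the identity holding even in the degenerate case where both sides equal $+\infty$. The only auxiliary fact one must assume is that $F_{ij}$ is finite, i.e. that the evader reaches $t$ with finite expected cost; I would record this as a standing assumption on the evader model rather than attempt to establish it here.
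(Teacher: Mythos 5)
Your proposal is correct and follows essentially the same route as the paper: expand $c(p)$ as a sum of edge costs, interchange the order of summation, and recognize the inner sum over paths as $F_{ij}$. The only difference is that you are more careful than the paper, which writes $F_{ij}=\sum_{p\in PT:(i,j)\in p}P(p)$ without multiplicity weights while simultaneously claiming $F_{ij}$ can exceed $1$ due to revisits; your explicit traversal counts $n_{ij}(p)$ and the Tonelli justification for the interchange repair exactly that looseness.
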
 
\begin{proof}
{By definition} $F_{ij} = \sum_{p\in PT:(i,j)\in p} P(p)$, and $F_{ij}$ can in general be larger than $1$ because paths may revisit $(i,j)$. The equivalency follows as
\begin{eqnarray*}
E[c] & = &\sum_{p\in PT}P(p)c(p)\,,\\
 & = & \sum_{p\in PT}P(p) \sum_{(i,j)\in p}{C_{ij}}\,,\\
 & = &  \sum_{(i,j)\in A}{C_{ij}\sum_{p\in PT:(i,j)\in p} P(p)}\,,\\
 & = & \sum_{(i,j)\in A}C_{ij}F_{ij}.
\end{eqnarray*}
\end{proof}
The expected cost $E[c]$ is now expressed through the expected number of visits to all edges (the $F_{ij}$ values).
The latter quantity may be hard to compute in general because every evader path could in principle visit edge $(i,j)$,
while the number of possible paths can be very large and even unbounded.
Fortunately, one particular class of stochastic models - Markov chains - gives a closed-form expression for $F_{ij}$.

\section{Markovian evaders}
We model the stochastic evader as a Markov chain that has its states at the nodes of the network.
In the most general case, the chain is completely described by (1) a distribution
of starting nodes, ${\bf a}$, and (2) a Markovian transition probability matrix, ${\bf M}$. 
In the next section, we will provide derivations of ${\bf M}$ for some realistic applications
by examining the decision-making mechanisms of a rational evader frustrated by uncertain information. 
Such an evader makes transitions that tend to bring him closer to his target.

Consider for now the most general case. 
The motion of the evader is just a Markov chain with an absorbing state at the target node $t$.
An element $M_{ij}$ of his transition probability matrix is the probability of motion from node $i$ to node $j$ along edge $(i,j)$. 
The matrix ${\bf M}$ must satisfy two conditions (1) Absorption at $t$: $M_{tt} = 1$ and $M_{ti}=0$ for all $i\neq t$, 
and (2) Access to $t$: from any starting state $i\neq t$ there is a positive probability of reaching state $t$ in
a finite number of transitions.
Because of condition (1) the transition matrix of an absorbing Markov chain can be arranged into the following canonical form
\begin{equation*}
{\bf M}=
\begin{pmatrix}
\bf \hat{M} &\bf R\\
\bf 0 & 1
\end{pmatrix}\,.
\end{equation*}
Here the matrix $\bf \hat{M}$ ($n-1$ by $n-1$) contains the transition probabilities among transient states.
The matrix $\bf R$ ($n-1$ by $1$) specifies the probabilities of transition from the transient states to the absorbing state. 

Similarly, the edge cost matrix for an absorbing Markovian evader takes a specific form
\begin{equation*}
{\bf C}=
\begin{pmatrix}
\bf \hat{C} &\bf S\\
\bf Z &0
\end{pmatrix}\, .
\end{equation*}
Here the matrix $\bf \hat{C}$ ($n-1$ by $n-1$) contains the costs for transition among transient states.
The matrix $\bf S$ ($n-1$ by $1$) specifies the costs for moving to the absorbing state,
while $\bf Z$ ($1$ by $n-1$) are cost for edges out of the absorbing states - those edges are never traversed.
The element $C_{tt}=0$ implies that there is no cost to remain at the target node $t$. 

Based on the matrix $\bf \hat{M}$ one constructs the {\em Fundamental Matrix} $\bf N$ of the chain:
$${\bf N} = ({\bf I} -{\bf \hat{M}})^{-1}$$.  
\begin{theorem}
Element $N_{ij}$ of the fundamental matrix gives the expected number of visits to state $j$ if starting at state $i$ 
(Theorem 11.4 in \cite{Grinstead97}.)
\end{theorem}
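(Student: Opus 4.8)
The plan is to prove this as the classical identity for absorbing Markov chains, by writing the expected number of visits as an infinite sum of one-step occupation probabilities and then summing a matrix geometric (Neumann) series. First I would introduce indicator random variables. For a chain started at transient state $i$, let $V_{ij}$ be the total number of times it occupies transient state $j$ before absorption, and let $\mathbf{1}_{k}^{(j)}$ be the indicator that the chain sits at $j$ after exactly $k$ steps. Then $V_{ij} = \sum_{k=0}^{\infty} \mathbf{1}_{k}^{(j)}$, and by linearity of expectation $E[V_{ij}] = \sum_{k=0}^{\infty} P(\text{at } j \text{ at step } k \mid \text{start at } i)$.

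Second, I would identify each term with an entry of a matrix power. By the Markov property and the Chapman--Kolmogorov equations, the probability of traveling from transient state $i$ to transient state $j$ in exactly $k$ steps while never leaving the transient block is the $(i,j)$ entry of $\hat{\mathbf{M}}^{k}$. Hence $E[V_{ij}] = \sum_{k=0}^{\infty} (\hat{\mathbf{M}}^{k})_{ij} = \left(\sum_{k=0}^{\infty} \hat{\mathbf{M}}^{k}\right)_{ij}$. Third, I would sum the series. The telescoping identity $(\mathbf{I}-\hat{\mathbf{M}})\sum_{k=0}^{K}\hat{\mathbf{M}}^{k} = \mathbf{I} - \hat{\mathbf{M}}^{K+1}$ lets one take $K\to\infty$ to obtain $\sum_{k=0}^{\infty}\hat{\mathbf{M}}^{k} = (\mathbf{I}-\hat{\mathbf{M}})^{-1} = \mathbf{N}$, provided $\hat{\mathbf{M}}^{K+1}\to \mathbf{0}$ and $\mathbf{I}-\hat{\mathbf{M}}$ is invertible. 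This yields the claimed identification $N_{ij} = E[V_{ij}]$.

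The main obstacle is justifying the convergence, i.e.\ that $\hat{\mathbf{M}}^{K}\to \mathbf{0}$, which is precisely where condition (2), access to $t$, enters. Because from every transient state there is a positive probability of reaching the absorbing target within finitely many steps, one can bound the probability of staying among the transient states over a fixed step-window by some $\rho < 1$; iterating this window shows that the row sums of $\hat{\mathbf{M}}^{K}$ decay geometrically to zero. This in turn forces the spectral radius of $\hat{\mathbf{M}}$ to be strictly below $1$, which simultaneously guarantees that $\mathbf{I}-\hat{\mathbf{M}}$ is invertible and that the remainder term $\hat{\mathbf{M}}^{K+1}$ vanishes in the limit, closing the argument.
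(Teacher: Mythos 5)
Your proof is correct and is precisely the standard argument for this classical fact about absorbing Markov chains; the paper itself gives no proof, simply citing Theorem~11.4 of \cite{Grinstead97}, whose proof proceeds exactly as you describe (indicator decomposition, Neumann series, and convergence via the access-to-absorption condition). Nothing is missing.
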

In general the starting state of the evader is given by a distribution $\bf a$ over the nodes.
For convenience, the absorbing node $t$ is excluded from $\bf a$, which is $n-1$-dimensional. 
The expected number of visits to $(i,j)$ before absorption at $t$ is
\begin{corollary}
\begin{equation}
F_{ij} = [{\bf a N}]_i M_{ij}.
\label{eq:edge_visit}
\end{equation}
\end{corollary}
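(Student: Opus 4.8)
The plan is to build the formula from the node-level statement that the preceding Theorem already supplies. First I would record that if the walk starts at a fixed transient node $k$, the expected number of visits to node $i$ before absorption at $t$ is exactly $N_{ki}$. Averaging over the starting distribution $\mathbf{a}$ and using linearity of expectation, the expected number of visits to node $i$ when the start is drawn from $\mathbf{a}$ becomes $\sum_k a_k N_{ki}$, which is precisely the $i$-th entry of the row vector $\mathbf{a}\mathbf{N}$, i.e. $[\mathbf{a}\mathbf{N}]_i$. This accounts for the node-visit factor appearing in the claimed expression.

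The remaining task is to convert visits to node $i$ into traversals of the particular edge $(i,j)$, which should contribute the factor $M_{ij}$. The natural heuristic, ``expected visits to $i$ times the per-visit probability $M_{ij}$ of choosing edge $(i,j)$,'' gives the right answer but requires justification: the number of visits to $i$ is itself a random variable whose value depends on the very transition decisions made out of $i$, so the visit count and the exit choices are \emph{not} independent and one cannot simply multiply an expectation by a probability. This dependence is the main obstacle, and it is the only nontrivial point in the argument (it is also what allows $F_{ij}$ to exceed one, since revisits to $(i,j)$ are permitted).

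To resolve it I would avoid the random visit count altogether and decompose over time steps. For each time index $n \geq 0$ set $Y_n = \mathbf{1}\{\text{evader is at node } i \text{ at step } n\}$ and $Z_n = \mathbf{1}\{\text{evader is at } i \text{ at step } n \text{ and moves along } (i,j)\}$. Then the total visits to $i$ is $\sum_n Y_n$, whose expectation is $[\mathbf{a}\mathbf{N}]_i$ by the first paragraph, while the total traversals of $(i,j)$ satisfy $F_{ij} = E[\sum_n Z_n]$. The key step is the one-step Markov property: conditioned on occupying node $i$ at step $n$, the probability of stepping to $j$ equals $M_{ij}$ regardless of the past, so $E[Z_n] = M_{ij}\,E[Y_n]$ for every $n$. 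Summing over $n$ and interchanging sum and expectation (legitimate by monotone convergence, since all terms are nonnegative, even when the time horizon is unbounded) yields $F_{ij} = M_{ij}\sum_n E[Y_n] = M_{ij}\,[\mathbf{a}\mathbf{N}]_i$, which is the claim.
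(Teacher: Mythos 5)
Your proof is correct. The paper states this corollary without any proof, treating it as immediate from the preceding theorem on the fundamental matrix; your argument is exactly the standard justification of that step, and your time-indexed indicator decomposition with the one-step Markov property correctly handles the one genuinely subtle point (the dependence between the visit count at $i$ and the exit choices made at $i$) that the bare ``expected visits times transition probability'' heuristic glosses over.
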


The expected cost $E[c]$ for a Markovian evader can be found by substituting (\ref{eq:edge_visit}) into (\ref{eq:expected_cost_gen})~\cite{Saerens09},
\begin{theorem}
\begin{equation}\label{eq:expected_cost_mat}
E[c] = {\bf a}{\bf N} diag\left[{\bf \hat{M} \hat{C}^T+RS^T}\right],
\end{equation}
where $diag\left[{\bf \hat{M} \hat{C}^T+RS^T}\right]$ denotes the column vector of the diagonal elements of matrix ${\bf \hat{M} \hat{C}^T+RS^T}$.
\end{theorem}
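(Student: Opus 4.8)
The plan is to start from the two facts already established in the excerpt—the Lemma giving $E[c] = \sum_{(i,j)\in A} C_{ij} F_{ij}$ and the Corollary giving $F_{ij} = [\mathbf{a N}]_i M_{ij}$—and simply translate the resulting scalar double sum into the claimed matrix form. First I would substitute the Corollary into the Lemma to obtain $E[c] = \sum_{(i,j)} [\mathbf{a N}]_i M_{ij} C_{ij}$. It is convenient to abbreviate the row vector of expected visit counts to the transient states by $\mathbf{v} = \mathbf{a N}$, so that $E[c] = \sum_i v_i \sum_j M_{ij} C_{ij}$, where the outer index $i$ automatically ranges only over transient states, since $\mathbf{a N}$ is $(n-1)$-dimensional and the evader never leaves $t$ once absorbed.

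The core of the argument is to split the inner sum according to whether the destination $j$ is another transient state or the absorbing target $t$, and then to recognize each piece as a diagonal entry of a matrix product. For transient destinations the contribution is $\sum_{j\neq t} \hat{M}_{ij}\hat{C}_{ij}$, and I would observe that this equals $[\hat{M}\hat{C}^T]_{ii}$, because $[\hat{M}\hat{C}^T]_{ii} = \sum_k \hat{M}_{ik}[\hat{C}^T]_{ki} = \sum_k \hat{M}_{ik}\hat{C}_{ik}$. The single remaining term corresponds to the final jump into the absorbing state, which by the canonical forms of $\mathbf{M}$ and $\mathbf{C}$ uses probability $M_{it}=R_i$ and cost $C_{it}=S_i$; this contributes $R_i S_i = [R S^T]_{ii}$. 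Summing the two identifications yields $\sum_j M_{ij} C_{ij} = [\hat{M}\hat{C}^T + R S^T]_{ii}$.

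The last step is purely notational: having shown that the inner sum is exactly the $i$-th diagonal entry of $\hat{M}\hat{C}^T + R S^T$, I would write $E[c] = \sum_i v_i [\hat{M}\hat{C}^T + R S^T]_{ii} = \mathbf{v}\,\mathrm{diag}[\hat{M}\hat{C}^T + R S^T]$, which upon restoring $\mathbf{v} = \mathbf{a N}$ is precisely the claimed identity.

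I anticipate the only real obstacle to be bookkeeping rather than conceptual depth. Two points require care. First, the transpose on $\hat{C}$ is essential: it forces the two factors in the diagonal entry to share the index pair $(i,k)$ and thereby reproduces the element-wise product $\hat{M}_{ik}\hat{C}_{ik}$ rather than the mismatched $\hat{M}_{ik}\hat{C}_{ki}$. Second, the absorbing state must be handled separately, since the fundamental matrix $\mathbf{N}$ and the visit vector $\mathbf{v}$ live only on the transient block; consequently the cost of the terminal transition into $t$ cannot be folded into $\hat{M}\hat{C}^T$ and must instead be reinstated explicitly through the $R S^T$ term.
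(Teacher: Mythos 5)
Your proposal is correct and follows exactly the route the paper indicates: substituting the Corollary $F_{ij}=[\mathbf{aN}]_iM_{ij}$ into the Lemma $E[c]=\sum_{(i,j)}C_{ij}F_{ij}$ and reading off the inner sum as the $i$-th diagonal entry of $\hat{\mathbf{M}}\hat{\mathbf{C}}^T+\mathbf{R}\mathbf{S}^T$. The paper leaves the bookkeeping (the transient/absorbing split and the role of the transpose) implicit, so your write-up is just a fuller version of the same argument.
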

In a special case where the edge cost is always $1$, i.e. $C_{ij} = 1, \ \forall (i,j)\in A$, 
$E[c]$ in (\ref{eq:expected_cost_mat}) 
reduces to the well-known expression for expected time-to-absorption: ${\bf a}{\bf N}{\bf e}$.

The objective in the Markovian network interdiction problem is to maximize $E[c]$. 
In the interdiction model, edge cost depends on the interdiction variable $r$. 
In turn, the transition matrix and the fundamental matrix depend on $r$ too. 
Therefore, this results in the nonlinear optimization problem
\begin{equation}
\max_{r\in\mathcal{R}}{{\bf a}{\bf N} diag\left[{\bf \hat{M}\hat{C}^T+RS^T}\right]}\,.
\label{eq:cost-objective}
\end{equation}
This optimization problem could be termed the \emph{Single Markovian Evader Network Interdiction} problem.
The distribution of starting nodes is assumed to be given and
independent of the interdiction strategy $r$, while the ${\bf M}$
matrix is assumed to be determined as soon as the graph and $r$ are
known. 
In numerical computations the most computationally demanding part resides
in finding ${\bf aN}={\bf a}({\bf I} -{\bf \hat{M}})^{-1}$, which require Gaussian elimination in general.

The problem in (\ref{eq:cost-objective}) can be generalized for the case of multiple evaders where each evader represents a threat scenario or an adversarial group.
Each evader $k$ then has certain probability $w^{(k)}$ of occurring $(\sum_k{w^{(k)}}=1)$,
as well as a distinctive source distribution ${\bf a}^{(k)}$, target node $t^{(k)}$ and 
transition matrix ${\bf M}^{(k)}$. The generalized objective is a weighted
sum of Eq.~(\ref{eq:expected_cost_mat}) over all evaders.

\section{Evader models}
\label{sec:Evader-Decision-Model}
As was noted in the introduction the evader may often be unable to
determine correctly the least-cost path to the target because of
incomplete and inaccurate information about the network topology,
interdiction decisions, or costs along alternative paths.  We now
develop a concrete Markovian model that incorporates uncertainty in
the path of the evader.  These types of models have analogues in other
contexts.  For example, a similar model was developed for routing in
ad-hoc wireless networks.  In that application the objective is to
transmit messages through the network with short delivery leg and balanced load~\cite{Barrett-2005-WirelesRouting}.

\subsection{The least-cost-guided evader}
\label{subsec:least-cost-guided-evader}
We suppose that at each node $i$ the evader will consider several paths from $i$ to $t$
and select the one that \emph{appears} to have the lowest cost.  Putting this in the content of a Markovian model,  we define
 $p_{i}$ be the least cost path from $i$ to $t$, with cost denoted by $c(p_{i})$. 
Suppose the evader has a destination $t$ and node $j$ is any node in the neighborhood of $i$ ($j \in G_i$).
The transition probability from $i$ to $j$ is
\begin{equation}
    M_{ij} = \frac{e^{-\lambda \left(c(p_{i})-C_{ij}-c(p_{j})\right)}}{\sum_{j\in G_i}{e^{-\lambda \left(c(p_{i})-C_{ij}-c(p_{j})\right)}}}\label{eq:mij-from-cost}\,,
\end{equation}
where $\lambda \geq 0$ is a parameter (see Fig.~\ref{fig:m_illustrated}.)
\begin{figure}[htbp]
\includegraphics[width=\columnwidth]{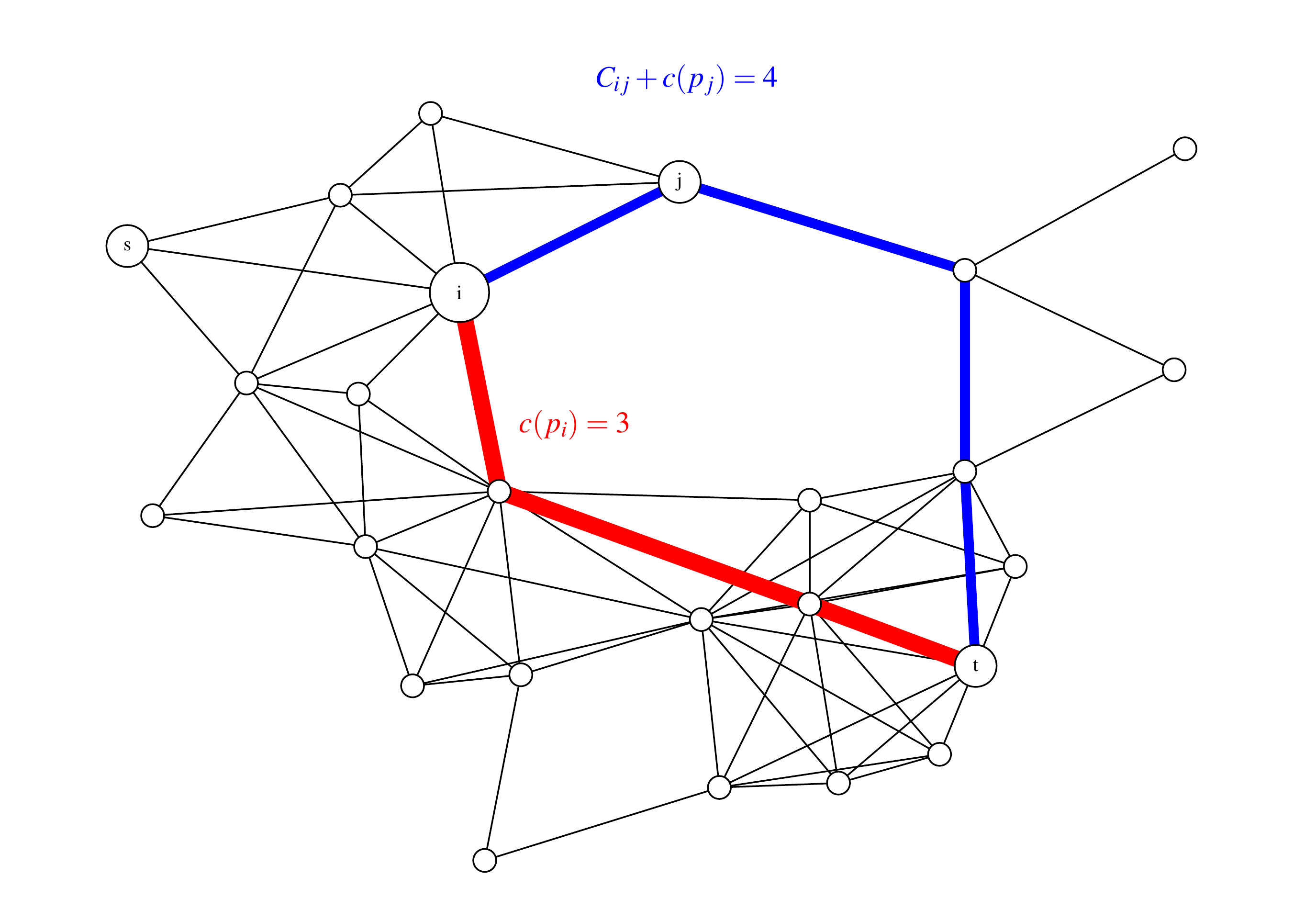}
\caption{
  Computation of the transition probabilities $M_{ij}$.  
  The least-cost path from node $i$ to the target $t$ 
  is the path $p_{i}$ (thick red) with cost $c(p_{i})=3$.  
  Through node $j$ the shortest path to $t$ is (thin blue) path 
  $p_{i}$ with cost $C_{ij}+c(p_{j})=4$.
}
\label{fig:m_illustrated}
\end{figure}

The adherence to the least-cost path is determined by the parameter $\lambda$.
When $\lambda\to\infty$ the evader moves deterministically along the least-cost path (or paths) and when
$\lambda\to0$ the motion is perfectly random. 
The least-cost path has the highest probability, but the difference with other paths vanishes as $\lambda\to0$. 
Hence, the model can be called the ``least-cost-guided evader''.

Notice that although ${M}_{ij}$ values in Eq.~(\ref{eq:mij-from-cost}) depend
on the cost of least-cost path, 
when $\lambda < \infty$ this dependence is a smooth function of path costs.
Thus the new formulation provides a more desirable description
of evader motion because it avoids the sensitivity to path costs seen in the shortest-path evader model.
The process of computing the probabilities involves running Dijkstra's algorithm to find the distance
to the target node from each node $i$, which gives $c(p_{i})$.

\subsection{The least-risk-guided evader}
In some applications the evader may base decisions on the risk of
crossing an edge rather than the cost.  In those cases the each edge
in the network is assigned a value $Y_{ij}$ for the probability of
successful evasion, instead of a cost $C_{ij}$ .  The evader attempts
to find the path to the target $t$ that offers the greatest
probability of evasion which is is just the product of those $Y_{ij}$
values along the path.

Let $q_{ij}$ be the probability of successful evasion on a path consisting of the edge $(i,j)$ and then of the least-risk path from $j$ to the target.
One choice is to assume that an evader would traverse edge $(i,j)$ with probability
\textit{proportional} to $q_{ij}$, or more generally, proportional to a positive
power of $q_{ij}$ 
\begin{equation}
    {M}_{ij}\propto\left(\frac{q_{ij}}{q_{i*}}\right)^{\lambda}\label{eq:transition-mat-q}\,,
\end{equation}
where $\lambda>0$ is a parameter, $q_{i*}=\max_j{q_{ij}}$ is the probability of
evasion if the least-risk path from $i$ to the target is followed
(the constant of proportionality is found from $\sum_{j}{M}_{ij}=1$.)

\subsection{The non-retreating evader}
A simple variant the least-cost-guided model is the non-retreating evader.
In this model it is assumed that an evader always moves to nodes that
are closer to the target node $t$ than the current node.
To represent this model assume that there is zero probability
of motion through $(i,j)$ if node $i$ is at least as close to the target as
node $j$, namely, $c(p_{i})\leq c(p_{j})$, where $c(p_{i})$ and $c(p_{j})$ 
are the smallest costs of paths to the target from nodes $i$
and $j$, respectively, computed by summing the edge weights. 

An interesting effect of this assumption is that the evader evader 
would never cross a node or an edge twice.
Consequently the set of nodes becomes a partially ordered set and 
as a result, there exists a relabeling $\sigma$ 
of the nodes such that if $c(p_{i})>c(p_{j})$ then $\sigma(i)>\sigma(j)$.
A simple (non-unique) procedure is to label the target node $t$ as $0$ 
($\sigma(t)=0$) and then rank the nodes in the order of their distance 
(cost) along least-cost path to $t$, breaking ties
arbitrarily. Computationally, this is the
same as the order the nodes are reached by a shortest path
(Dijkstra's) algorithm starting at $t$.
 The transition probability becomes 
\begin{equation*}
\hat{M}_{ij}=
\begin{cases}
M_{ij}\,, & c(p_{i})>c(p_{j})\,,\\
0\,, & c(p_{i})\leq c(p_{j})\,.
\end{cases}
\end{equation*}

In this case all paths must reach
the target after at most $|N|-1$ steps, where $|N|$ is the number nodes in $G$,
and hence ${\bf \hat{M}}$ becomes nilpotent of power $|N|-1$. Moreover, by labeling
the nodes up in order of increasing cost, ${\bf \hat{M}}$ can be written
as a lower-triangular matrix with zero diagonal. For example, if
the evader traverses a $2\times 3$ grid with the target at one corner
then one possible $\sigma$ gives the matrix
\begin{equation*}
{\bf \hat{M}}=
\begin{pmatrix}
0 \\
1 & 0\\
1 & 0 & 0\\
0 & 1   & 0 & 0\\
0 & 0.5  & 0.5  & 0  & 0\\
0 &  0  &  0  & 0.5 & 0.5 & 0
\end{pmatrix}\,.
\end{equation*}

The special matrix structure facilitates an order-of magnitude speedup in the computation
of Eq.~\ref{eq:expected_cost_mat}. For a general ${\bf M}$, computing 
${\bf a}{({\bf I-M})}^{-1}$
involves Gaussian elimination at a cost of $2|N|^{3}/3$ operations.
For a nilpotent lower-triangular ${\bf \hat{M}}$ 
the cost falls to $O(|N|^{2})$ since we can use backward-forward
substitutions instead of Gaussian elimination. The cost of computing
the objective function Eq.~(\ref{eq:expected_cost_mat}) is also expected to drop to $O(|N|^2)$
despite the need to reorder the matrix ${\bf C}$ when the nodes are
relabeled.

\section{Solving the Markovian interdiction problem}
\label{sec:Solving-the-Markovian}

The challenge of network interdiction consists of developing both
realistic models and tractable algorithms.
The Markovian evader model adds realism 
but does not reduce the computational complexity of finding good interdiction solutions.
Indeed it is clear that the Markovian model is computationally hard because 
in the limit of $\lambda\to\infty$, the model becomes the least-cost
interdiction problem which is NP-Hard \cite{Ball89,bar-noy-1995-complexity}
and also hard to approximate~\cite{boros06-inapproximability}. 
Therefore, this section discusses solution heuristics based on  
network structure. 

A common approach to solving many combinatorial optimization problems
is based on local, or neighborhood, search algorithms such as simulated annealing~\cite{Kelly96}.
But those general-purpose local search algorithms 
do not scale well to larger problems or find poor solutions.
The solution space may be exponential in the budget so any iterative improvement process 
of local search can only explore a very small fraction of solutions in a polynomial number
of steps.  
It follows that high-quality solutions can only come from more
specialized solvers that exploit the structure of the interdiction problem.  
We explore algorithms based on ranking functions that rank edges according to global information about graph structure.

\subsection{Betweenness centrality heuristic}
The most successful ranking function we found is derived from the
shortest-path betweenness centrality.
The shortest-path betweenness centrality of an edge is the fraction of
least-cost paths between all pairs of nodes in a network that cross 
the edge~\cite{freeman-1977-set}.  This metric identifies those
edges that are critical to connectivity within a network, such as
bridge edges that joins two graph components, because they participate
in a large number of least-cost paths linking nodes on a network,

We constructed an heuristic based on shortest-path betweenness centrality by considering
only paths between the sources {\bf a} and the target $t$ of the evader.
Recall that $a_s$ is the probability that the evader would start at node $s$.
Let $\sigma_{st,\mathcal{R}}$ be the number of least-cost paths between nodes $s$ and the target node $t$ in the graph with interdiction set $\mathcal{R}$.
Similarly, let $\sigma_{st,\mathcal{R}}(e)$ be the number of those paths that pass through edge $e$.
Therefore, we define the source-weighted centrality of edge $e$ with respect to $t$ as the sum
\begin{equation}
    H_{\mathcal{R}}(e)= \sum_{s : t \ne s \in V} a_s \frac{\sigma_{st,\mathcal{R}}(e)}{\sigma_{st,\mathcal{R}}}.
\end{equation}
Notice that this quantity needs to be re-computed during execution of
an interdiction problem:
as the interdiction set $\mathcal{R}$ is increased, the costs of the edges change
and so are the least-cost paths.
An algorithm for calculating a metric of this kind for all $e\in A$ 
in $O(|A|+|N|\log|N|)$ time is found in Ref.~\cite{Brandes-2001-faster}.
In the case of multiple evaders, the heuristic is computed for each
evader and weighted based on $w^{(k)}$.

\subsection{Algorithms}
We use the betweenness heuristic $H_{\mathcal{R}}(e)$ to rank the edges $e$ in the network
given the interdiction set $\mathcal{R}$.  This heuristic leads to a
simple algorithm, termed Betweenness (Alg.~\ref{al:DGH}), that performs
a sequential selection of edges. 
\begin{algorithm}[ht]\caption{Betweenness algorithm using global heuristic $H$ for budget $B$}
    \label{al:DGH}
\begin{algorithmic}

\STATE $\mathcal{R}\leftarrow\varnothing$

\WHILE{$B>0$}

\STATE $\mathcal{R}\leftarrow \mathcal{R}\cup\left\{ \argmax_{e\in A\smallsetminus \mathcal{R}} H_{\mathcal{R}}(e)\right\} $,
resolving ties arbitrarily.

\STATE $B\leftarrow B-1$

\ENDWHILE
\STATE \textbf{Output}($\mathcal{R}$)
\end{algorithmic}
\end{algorithm}
The betweenness algorithm is fast since it does not evaluate the objective function but
only has to initially compute the ranking heuristic and then re-evaluate it after
the interdicted edge is chosen.  The heuristic is called $B$ times: once for each of the budgeted edges.

For comparison we also use a more computational expensive greedy
algorithm (Alg.~\ref{al:RG}) that constructs the interdiction set $\mathcal{R}$
incrementally.  At each of the $B$ steps, the greedy algorithm
computes $\Delta_{\mathcal{R}}(e)$, the increase in the objective function due
to addition of edge $e$ and then selects the best edge.
\begin{algorithm}[ht]
\caption{Greedy algorithm for the construction of the interdiction set $\mathcal{R}$ with budget $B$}
\label{al:RG}
\begin{algorithmic}

\STATE $\mathcal{R}\leftarrow\varnothing$

\WHILE{$B>0$}
\FORALL {$e\in A$}
    \STATE $\Delta_{\mathcal{R}}(e):=h\left(\mathcal{R}\cup\left\{ e\right\} \right)-h\left(\mathcal{R}\right)$
\ENDFOR

\STATE $\mathcal{R}\leftarrow \mathcal{R}\cup\left\{ \argmax_{e\in A}\Delta_{\mathcal{R}}(e)\right\} $, resolving ties arbitrarily.

\STATE $B\leftarrow B-1$

\ENDWHILE
\STATE \textbf{Output}($\mathcal{R}$)
\end{algorithmic}
\end{algorithm}

\subsection{Performance results}
We now demonstrate the performance of the Greedy and Betweenness
algorithms on a sample network interdiction problem and show the
effect of varying the randomness parameter $\lambda$.  We used a
network which consists
of a $10\times10$ grid of directed edges with 10 added shortcuts
between random pairs of nodes for a total of $420$ edges.
Weights were assigned to each edge by
choosing uniformly at random from the interval [0.5, 1.5].  We
selected 2 distinct targets at random (i.e. 2 evaders) each with 5 source
locations.
\begin{figure}[H]
\includegraphics[width=\columnwidth]{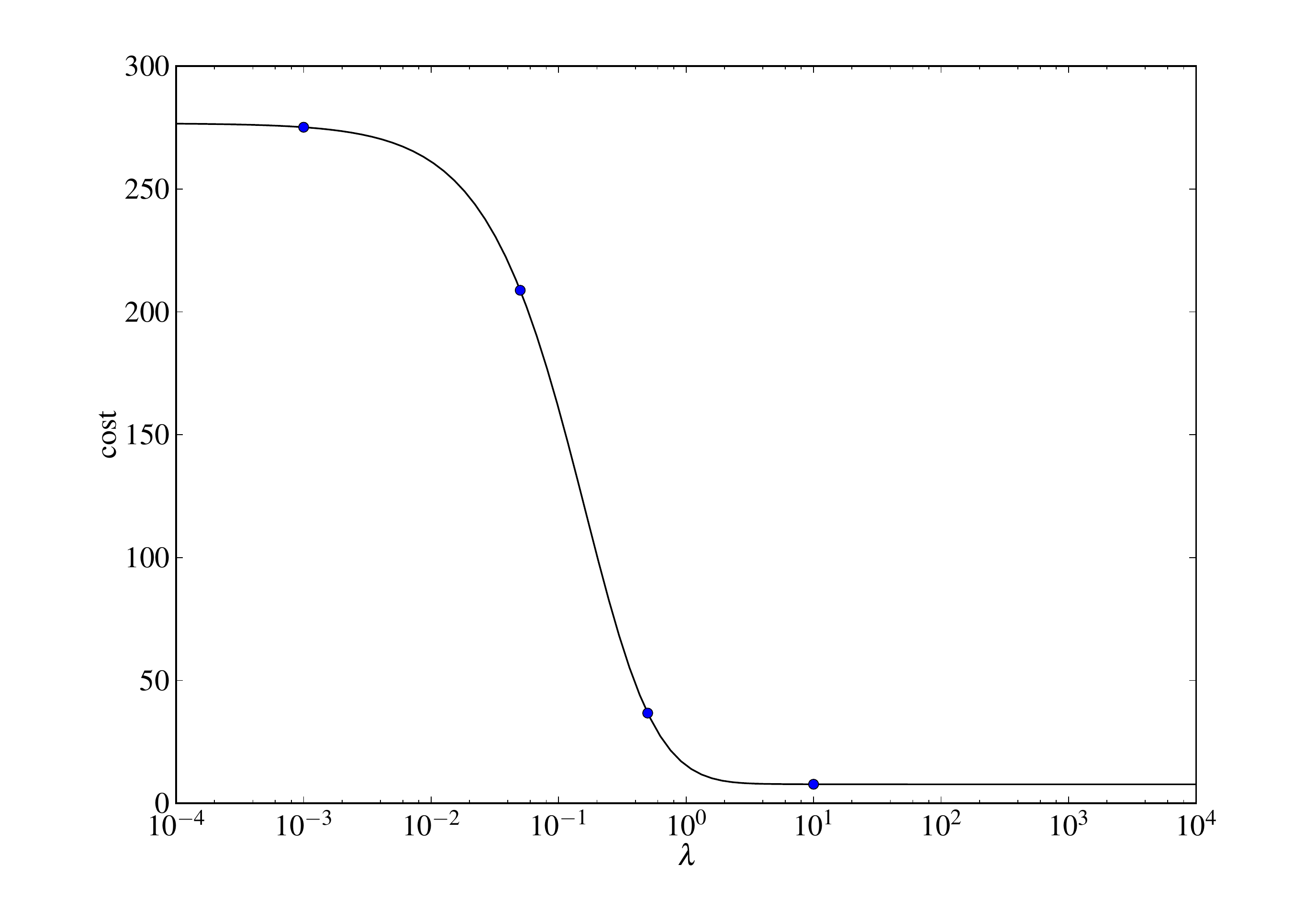}
\caption{
  The expected cost of reaching the target from the source a function of the
  parameter $\lambda$ for an example network.   For large values of $\lambda$ the
  model chooses only the shortest path and the expected cost is lowest. As $\lambda$ decreases
  the cost increases as the paths become more random.  For $\lambda=0$ the paths are completely
  random and the cost is at the maximum.
  The expected cost is calculated by Eq.~(\ref{eq:expected_cost_mat}) with the evader model
  $M$ given by Eq.~(\ref{eq:mij-from-cost}).
  The network is a $10\times10$ directed grid with $10$ randomly added shortcut
  edges and the target and source are chosen randomly.  Each of the
  edges have weights chosen uniformly from [0.5,1.5].  
  The marked points will be used in performance evaluations, presented in Fig.~\ref{fig:heuristics}.
}
\label{fig:baseCost_vs_lambda}
\end{figure}

The motion of the evader followed the least-cost-guided model.  
In this model, the effect of the parameter $\lambda$ on the expected cost for the
evader (before interdiction) is not linear, as shown in Fig.~\ref{fig:baseCost_vs_lambda}.
At low values of $\lambda$ 
the motion is random and the cost is the highest.  As $\lambda$ is
increased the evader follows paths that are closer to the optimal
path and the cost decreases continuously toward the minimum
achievable at large $\lambda$.  The transition between the cost of
random motion and the optimal cost occurs rapidly over a small
range of $\lambda$ where the most diverse behavior is found.
This transition in behavior was observed in other random and structured graphs and
real-world networks that we examined and is a feature of the nonlinear
dependence of the path probabilities from Eq.~(\ref{eq:mij-from-cost}).

Fig.~\ref{fig:heuristics} shows characteristic performance results for both the Greedy and Betweenness algorithms for various $\lambda$.  
The performance is measured in terms of the expected cost given by
Eq.~(\ref{eq:expected_cost_mat}).  Interdiction of an edge causes the
weight of the edge to increase by a fixed value $D_{ij}$.  We set the added
increase to be half the diameter of the network which in this case is
$D_{ij}=4.5$.
\begin{figure*}[!t]
  \includegraphics[width=2\columnwidth]{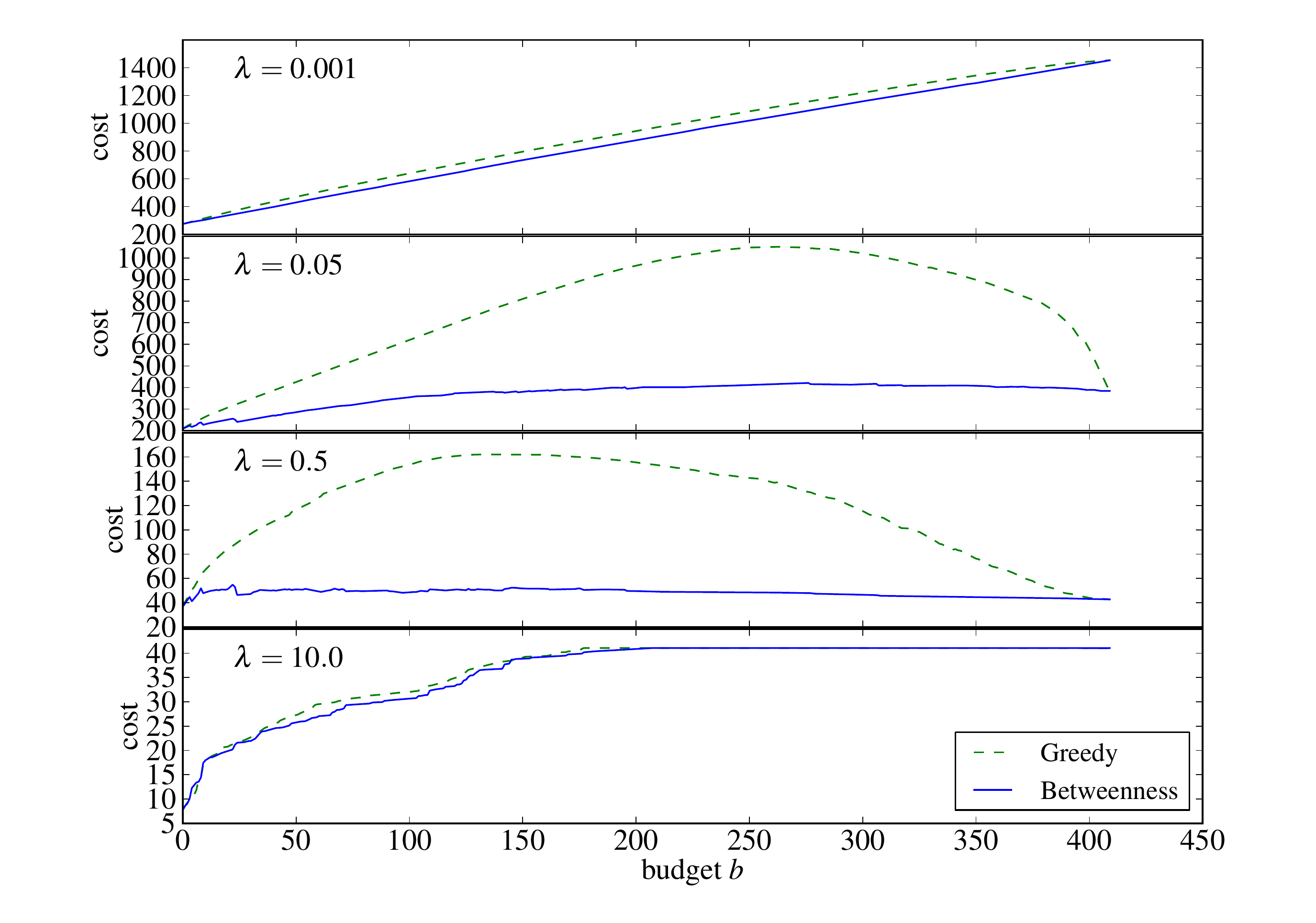}
  \caption{\label{fig:heuristics}
    Comparison of the Greedy (\ref{al:RG}) and Betweenness
(\ref{al:DGH}) algorithms for given budgets on the sample grid network
described in Fig.~$\ref{fig:baseCost_vs_lambda}$.  Four different
values of $\lambda$ are shown corresponding to different levels of
randomness in the evader path selection. When the randomness of the
evader is low (high $\lambda$) the Betweenness algorithm performs very
well compared to the higher computational cost Greedy algorithm.  As
the randomness increases the algorithms' performance diverges after very
small budgets - demonstrating that the Betweenness heuristic is no
longer effective.  At low values of $\lambda$ the evader motion is
random and no algorithm will be effective.  The convergence of the
algorithms at large budgets occurs because we do not allow an
edge to be interdicted more than once and at that budget every edge in
the graph is interdicted and the costs are the same.
  }
\end{figure*}

For small budgets the Betweenness algorithm and Greedy algorithm
produce comparable results as measured by the increase in cost for all
$\lambda$ values.  The Betweenness algorithm is considerably cheaper
in computational cost. As the budget is increased the Betweenness heuristic
performs very well for larger $\lambda$.
But for smaller $\lambda$, as the evader randomness
increases, the algorithm performance difference diverges indicating
that the Betweenness heuristic is no longer effective.  At very low
values of $\lambda$ the evader motion is random and no algorithm is expected to
be effective.

A particularly interesting phenomenon is the non-monotonicity of the expected cost.
Namely, for some low $\lambda$ values the expected cost $E(c)$ sometimes actually decreases after 
the interdiction set is enlarged.
This effect was anticipated by the example in Fig.~\ref{fig:bad-classical}
and it occurs because the behavior of the randomizing evader is fundamentally
different from the behavior of the max-min evader.
If we relax the budget constraint $|\mathcal{R}| = B$ to $|\mathcal{R}| \le
B$, the objective will be nondecreasing in the Greedy algorithm.

Other realizations of $10\times10$ grid networks produce
similar results and are not shown here. 
In addition to this example we have explored the performance of the
algorithms on other networks including real-world of
transportation networks, such as the Washington DC transportation
transit time network and the Rome city road
network~\cite{gutfraind-transport-2010}. The computation cost of the
Greedy algorithm becomes prohibitive in these and other 
urban, national and international transportation systems.
Those networks have $10^3 - 10^7$ edges, depending on the spatial resolution.
The Greedy algorithm running time scales as $O(|A||N|^3)$ for the least-cost-guided evader model, while the Betweenness algorithm remains feasible even on very large instances because its
running time scales as $O(|A|+|N|\log|N|)$.

\section{Conclusions and outlook}
Practical instances of network interdiction must invariably address
the uncertainty in the network structure and evader behavior.  Such
behavior can be modeled using the proposed Markov chain approach, which achieves increased
realism while remaining analytically penetrable.  To summarize, the main
contribution of this work are:
\begin{itemize}
    \item a demonstration of the fundamental advantages of stochastic models over least-cost models,
    \item a stochastic model of the evader motion based on a Markovian guided random walk, and
    \item a scalable interdiction algorithm based on a specialized betweenness centrality function.
\end{itemize}

Future research must address both computational and modeling
challenges in stochastic network interdiction.  Current algorithms are
effective in the case where the evader motion is partially
predictable.  It is not known whether more specialized heuristics can
be more successful in the case of highly-stochastic adversaries.  In
the current model the randomness comes only from information
constraints. In some problems computational constraints on the evader also play a role in
determining his motion.  Models that account for both kinds of constrains
promise further gains in realism and would expand the range of
applications where network interdiction could be used.

\paragraph*{Acknowledgments}
AG would like to thank David Shmoys and Vadas Gintautas for fruitful
discussions.  Part of this work was funded by by DTRA Basic Research
under contract IACRO \#09-4693I.

\pagebreak[3]
\bibliographystyle{plain}
\bibliography{interdict}

\end{document}